\newacronym[longplural={quadratic programs}]{qp}{QP}{quadratic program}
\newacronym[longplural={second-order cone program}]{socp}{SOCP}{second-order cone program}
\newacronym[longplural={interior point methods}]{ipm}{IPM}{interior point method}
\newacronym[longplural={sequential quadratic programs}]{sqp}{SQP}{sequential quadratic program}
\newacronym[longplural={sequential convex programs}]{scp}{SCP}{sequential convex program}
\newacronym[longplural={nonlinear programs}]{nlp}{NLP}{nonlinear program}
\newacronym[longplural={mixed-integer convex programs}]{micp}{MICP}{mixed-integer convex program}
\newacronym{licq}{LICQ}{Linear Independence Constraint Qualification}
\newacronym{kkt}{KKT}{Karush–Kuhn–Tucker}
\newacronym{pmp}{PMP}{Pontryagin's Maximum Principle}
\newacronym{scvx}{SCvx}{Successive Convexification}
\newacronym{lcvx}{LCvx}{Lossless Convexification}
\newacronym{tscvx}{T-SCvx}{Transformer-based Successive Convexification}
\newacronym{upg}{UPG}{Universal Powered Guidance}
\newacronym{pdg}{PDG}{Powered Descent Guidance}
\newacronym{tpdg}{T-PDG}{Transformer-based Powered Descent Guidance}
\newacronym[longplural={degrees-of-freedom}]{dof}{DoF}{degree-of-freedom}
\newacronym{pdf}{PDF}{probability density function}
\newacronym[longplural={Denoising Diffusion Probabilistic Models}]{ddpm}{DDPM}{Denoising Diffusion Probabilistic Model}
\newacronym[longplural={Deep Neural Networks}]{dnn}{DNN}{Deep Neural Network}
\newacronym{ido}{IDO}{Iterative Diffusion Optimization}
\newacronym{socm}{SOCM}{stochastic optimal control matching}
\newacronym{ood}{OOD}{out-of-distribution}
\newtheorem{definition}{Definition}
\newenvironment{claim}[1]{\par\noindent\underline{Claim:}\space#1}{}
\newtheorem{theorem}{Theorem}[section]
\newcommand{\ie}{{i.e.}}
\title{Tight Constraint Prediction of Six-Degree-of-Freedom Transformer-based Powered Descent Guidance}
\author{Julia Briden\footnote{Advanced Mission Design and GN\&C Engineer, Amentum, NASA Johnson Space Center, 2101 E NASA Pky, Houston, TX 77058 USA, and AIAA Member.}
}
\affil{2101 E NASA Pky, Houston, TX, 77058 USA}
\author{Trey Gurga \footnote{Undergraduate Student, Department of Aeronautics and Astronautics, 77 Massachusetts Avenue, Cambridge, Massachusetts, 02139 USA, and AIAA Student Member.}}
\affil{Department of Aeronautics and Astronautics, Massachusetts Institute of Technology, 77 Massachusetts Avenue, Cambridge, Massachusetts, 02139 USA}
\author{Breanna Johnson \footnote{Aerospace Engineer, Flight Mechanics and Trajectory Design Branch, EG5, NASA Johnson Space Center, 2101 E NASA Pky, Houston, TX, Senior Member AIAA.}}
\affil{2101 E NASA Pky, Houston, TX, 77058 USA}
\author{Abhishek Cauligi \footnote{Robotics Technologist, Jet Propulsion Laboratory, California Institute of Technology, Pasadena, CA 91109, USA.}}
\affil{NASA Jet Propulsion Laboratory, California Institute of Technology, 4800 Oak Grove Dr, Pasadena, CA 91109 USA}
\author{Richard Linares \footnote{Rockwell International Career Development Professor and Associate Professor, Department of Aeronautics and Astronautics, 125 Massachusetts Avenue. Senior Member AIAA.}}
\affil{Department of Aeronautics and Astronautics, Massachusetts Institute of Technology, 77 Massachusetts Avenue, Cambridge, Massachusetts, 02139 USA}
\begin{document}

\maketitle

\begin{abstract}
This work introduces Transformer-based Successive Convexification (T-SCvx), an extension of Transformer-based Powered Descent Guidance (T-PDG), generalizable for efficient six-degree-of-freedom (DoF) fuel-optimal powered descent trajectory generation. Our approach significantly enhances the sample efficiency and solution quality for nonconvex-powered descent guidance by employing a rotation invariant transformation of the sampled dataset.
T-PDG was previously applied to the 3-DoF minimum fuel powered descent guidance problem, improving solution times by up to an order of magnitude compared to lossless convexification (LCvx).
By learning to predict the set of tight or active constraints at the optimal control problem's solution, Transformer-based Successive Convexification (T-SCvx) creates the minimal reduced-size problem initialized with only the tight constraints, then uses the solution of this reduced problem to warm-start the direct optimization solver.
6-DoF powered descent guidance is known to be challenging to solve quickly and reliably due to the nonlinear and non-convex nature of the problem, the discretization scheme heavily influencing solution validity, and reference trajectory initialization determining algorithm convergence or divergence.
Our contributions in this work address these challenges by extending T-PDG to learn the set of tight constraints for the successive convexification (SCvx) formulation of the 6-DoF powered descent guidance problem.
In addition to reducing the problem size, feasible and locally optimal reference trajectories are also learned to facilitate convergence from the initial guess.
T-SCvx enables onboard computation of real-time guidance trajectories, demonstrated by a 6-DoF Mars powered landing application problem.
\end{abstract}

\section{Nomenclature}

\textbf{Variables}{\renewcommand\arraystretch{1.0}
\noindent\begin{longtable*}{@{}l @{\quad=\quad} l@{}}
$\alpha_{\dot m}$ & fuel consumption rate \\
$\beta_{\dot m}$ & vacuum specific impulse \\
$\delta_{\max}$ & maximum gimbal angle \\
$\nu$ & virtual control \\
$\bm{\Omega}_{\omega_\mathcal{B} (t)}$ & quaternion transform \\
$\omega$ & penalty coefficients \\
$\bm{\omega}_\mathcal{B}$ & angular velocity \\
$\phi$ & pitch angle \\
$\sigma^i$ & time-scaling factor \\
$\tau_i$ & set of tight constraints for sample $i$ \\
$\theta_i$ & neural network input parameters for sample $i$ \\
$\theta_{\max}$ & maximum pointing angle \\
$A$ & state dynamics matrix \\
$B$ & control dynamics matrix \\
$C$ & future control dynamics matrix \\
$C_{\mathcal{I} \leftarrow \mathcal{B}}$
& transformation matrix \\
$d_k$ & number of columns in the transformer weight matrix \\
$\bm{e}$ & inertial frame unit vector $(e_1, e_2, e_3)$, defined at the landing site, with $e_1$ pointing in the opposite direction of $\bm{g}_\mathcal{I}$ \\
$\bm{g}_\mathcal{I}$ & gravitational acceleration \\
$H_\gamma$ & glideslope matrix \\
$H_\theta$ & pointing angle matrix \\
$h_{\text{gs}}$  & glideslope constraint vector \\
$h = 1,...,H$  & transformer head \\
$I$ & identity \\
$\mathbf{J}_\mathcal{B}$ & moment of inertia \\
$K$ & final time step \\
$K_h$ & transformer key matrices \\
$k$ & iteration number \\
$M_\mathcal{B}$ & net propulsive and aerodynamic torque acting on the vehicle equal to $r_{T,\mathcal{B}} \times T_\mathcal{B} (t) + r_{\text{cp}, \mathcal{B}} \times A_\mathcal{B}(t)$ [N m] \\
$m$ & mass \\
$m_\text{wet}$ & wet mass \\
$N$  & number of discretization nodes \\
$O_h$ & transformer attention output \\
$Q_h$ & transformer key matrices \\
$P_\text{amb}$ & ambient atmospheric pressure [N/m$^2$] \\
$q$ & spacecraft quaternion \\
$\bm{r}_\mathcal{I}$ & position \\
$\mathbf{s}$ & state \\
$s_i$ & neural network strategy output for sample $i$ \\
$t_f$ & final time \\
$\mathbf{T}_\mathcal{B}$ & thrust \\
$T_{\max}$ & maximum thrust \\
$T_{\min}$ & minimum thrust \\
$V_h$ & transformer value matrices \\
$\bm{v}_\mathcal{I}$ & velocity \\
$z$ & optimization problem parameter
\end{longtable*}

\textbf{Functions}{\renewcommand\arraystretch{1.0}
\noindent\begin{longtable*}{@{}l @{\quad=\quad} l@{}}
$f: \mathbb{R}^n \rightarrow \mathbb{R}$ & cost function \\
$\boldsymbol{g}: \mathbb{R}^{n_x} \times \mathbb{R}^{n_u} \times \mathbb{R} \rightarrow  \mathbb{R}^{n_c}$ & vector of non-convex constraints \\
$\boldsymbol{h}: \mathbb{R}^{n_x} \times \mathbb{R}^{n_x} \times \mathbb{R} \rightarrow  \mathbb{R}^{n_b}$ & vector of convex constraints \\
\end{longtable*}}

\textbf{Notation}{\renewcommand\arraystretch{1.0}
\noindent\begin{longtable*}{@{}l @{\quad=\quad} l@{}}
$\otimes$ & quaternion multiplication \\
$\Omega$ & skew-symmetric matrix defined such that the quaternion kinematics hold~\cite{Shuster1993} \\
$\hat{e}$  & unit direction vector \\
$\theta$ & parametric problem parameters \\
$\mathcal{B}$ & problem parameters in the body-fixed reference frame \\
$\mathcal{I}_\text{eq}$ & set of non-convex equality constraints \\
$\mathcal{I}_\text{ineq}$ & set of non-convex inequality constraints \\
$\mathcal{J}_\text{ineq}$ & set of convex inequality constraints \\
$\mathcal{I}$ & problem parameters in the inertial reference frame \\
$q^*_{\mathcal{B} \leftarrow \mathcal{I}}$ & conjugate of $q_{\mathcal{B} \leftarrow \mathcal{I}}$ \\
$q_{\text{id}}$ & identity quaternion \\

\end{longtable*}}

\section{Introduction}
\lettrine{I}{ncreasingly} complex and high mass planetary missions require autonomous long-horizon trajectory generation to achieve dynamically feasible~\ac{pdg}.
Moreover, there is a pressing need to generate fuel-optimal trajectories in order to satisfy the mass and safety margin requirements necessary for enabling human-rated vehicles.
A key challenge in applying existing methods is that the radiation-hardened processors used in most aerospace applications struggle to satisfy the sub-second trajectory generation requirements necessary to enable onboard usage.
While current work in custom solver implementations for 6-\ac{dof}~\ac{pdg} has achieved sub-millisecond level runtimes, these only hold for short horizon problems of less than 50 discretization nodes~\cite{SzmukReynoldsEtAl2020}.
Recently, data-driven methods have emerged as a promising tool in reducing onboard computation times for numerous classes of algorithms~\cite{IzzoMartensEtAl2019,GuffantiGamelliEtAl2024} and researchers have introduced methods such as~\ac{tpdg} to bridge this gap in long-horizon trajectory generation \cite{BridenGurgaEtAl2024}.
This method utilizes previous runs of the 3-\ac{dof}~\ac{lcvx} implementation of the Mars-powered descent landing problem to train a non-linear mapping between problem parameters and the set of tight constraints that defines the optimal problem solution.
The reduced problem, defined by equality and identified tight constraints, is then solved and used as an initial guess for the full problem to ensure feasibility and constraint satisfaction. 
This work extends~\ac{tpdg} to 6-\ac{dof}, introducing~\ac{tscvx}, on a smaller rotation-invariant training and test set of trajectories generated using~\ac{scvx}.
Additionally, variable-horizon outputs, which include a trajectory and control guess, are predicted to facilitate fast and reliable convergence to a locally optimal solution.

By enabling the deployment of autonomous optimal guidance technologies for spacecraft powered descent landing trajectory generation, a wider range of dispersions and uncertainties can be recovered, improving the mission safety margins and enabling the exploration of increasingly challenging and scientifically rich landing sites.

Optimal solutions can be formulated either using indirect or direct methods.
Indirect methods, including~\ac{upg} and Propellant-Optimal~\ac{pdg}, solve for the necessary optimality conditions for the optimal control problem by solving a two-point boundary value problem corresponding to the state and costate dynamics and their boundary conditions \cite{LuForbesEtAl2012, Lu2018}.
For common classes of problems, application of Pontryagin's Maximum Principle~\cite{Kirk2012} yield analytical solutions, but these methods fall short for trajectory generation as they are applicable to a limited set of mission constraints and objective functions~\cite{Klumpp1974, NajsonMease2006, TopcuCasolivaEtAl2007, SostaricRea2005}.
Moreover, while both analytical and indirect methods are computationally efficient, significant simplifications of the dynamics and constraints are required for both problem formulations.
As such, bang-bang control strategies and linear gravity assumptions are often required and both state and control inequality constraints are not easily implementable in a root-finding framework~\cite{LuForbesEtAl2012, Lu2018}.

In contrast, direct methods, often formulated as~\acp{scp}~\cite{ByrdGouldEtAl2003, PalaciosGomezEtAl1982, HorstThoai1999, YilleRangarajan2001, LiuLu2014, LiuShenEtAl2015, MaoSzmukEtAl2016, WangGrant2017, MalyutaReynoldsEtAl2022} or~\acp{sqp} \cite{Han1997, Powell1978, PowellYuan1986, BoggsTolle1989, BoggsTolle1995, Fukushima1986, GillMurrayEtAl2005, BettsHuffman1993}, compute locally optimal solutions for a general class of non-convex constrained optimization problems.
A special case of direct methods is the~\ac{socp}, which results from the lossless convexification of nonconvex cost, dynamics, and control constraints, often applied to the 3-\ac{dof} powered descent guidance problem~\cite{AcikmeseBlackmore2011, HarrisAcikmese2014, BlackmoreAccikmeseEtAl2012, AccikmesePloenEtAl2005, AccikmesePloenEtAl20017, BlakmoreAccikmeseEtAl2010, AccikmeseCarsonEtAl2013, HarrisAccikmese2013, DueriZhangEtAl2014, DueriAcikmeseEtAl2017}.
In this case, the nonconvex problem formulation can be formulated as a convex~\ac{socp}, enabling the recovery of the globally optimal solution, including certificates of convergence and infeasibility via an~\ac{ipm} algorithm \cite{BoydVandenberghe2004,NocedalWright2006}.
An extension of this work based on dual-quaternions allowed for 6-\ac{dof} motion, but required piecewise-affine approximations of the nonlinear dynamics, degrading solution accuracy as the temporal resolution of the discretization decreases~\cite{LeeMesbahi2015, LeeMesbahi2016}.
This special case of problems also relies on a line search to recover the optimal final time due to the non-convexities introduced by allowing a free final time~\cite{MalyutaReynoldsEtAl2022}.

To generalize beyond the narrow class of problems that can be losslessly convexified,~\ac{scvx} can solve the general class of non-convex optimal control problems while forgoing the rigorous optimality and convergence guarantees provided by convex solvers~\cite{BoydVandenberghe2004}.
\ac{scvx} transforms the non-convex full problem into a sequence of convex subproblems, which can be solved successfully until convergence~\cite{LiuLu2014, LiuShenEtAl2015, MaoSzmukEtAl2016, WangGrant2017, MaoSzmukEtAl2016}.
\ac{scp}-based methods have been generalized beyond powered descent guidance to non-convex control-affine systems and incorporated indirect optimal control methods for accelerated convergence, as well as for~\acp{micp}~\cite{CauligiCulbertsonEtAl2020a, CauligiCulbertsonEtAl2022, CauligiChakrabartyEtAl2022}.
In this work, we focus on efficiently solving the free-final-time 6-\ac{dof}~\ac{scvx} problem formulation by predicting the reduced problem defined by only the tight constraints and a dynamically feasible initial guess via a transformer-based deep neural network.
Full constraint satisfaction is guaranteed by warm-starting the full problem with the reduced problem's solution.

This work aims to improve the computational efficiency of the 6-\ac{dof} powered descent guidance problem with free ignition time, including aerodynamic forces and conditional enforced constraints \cite{SzmukReynoldsEtAl2020}.
While this problem setup enables high-fidelity locally fuel-optimal trajectory generation via explicit inclusion of operational and mission constraints, directly solving the problem in real-time is challenging due to nonlinear dynamics and non-convex state and control constraints, which do not yield close-form solutions.
Furthermore, the convergence of this nonconvex problem depends heavily on both the discretization scheme and the development of a suitable initial guess trajectory to initialize the solver.
This work focuses on efficiently solving the free-final-time 6-\ac{dof}~\ac{scvx} problem formulation by predicting the reduced problem defined by only the tight constraints and a dynamically feasible initial guess via a transformer-based deep neural network.
Full constraint satisfaction is guaranteed by warm-starting the full problem with the reduced problem's solution.

\subsection{Contributions}
This work develops a 6-\ac{dof}~\ac{tscvx} with Mars landing, benchmarks against~\ac{scvx} algorithms, and lookup table-based methods.
\ac{tscvx} significantly improves the convergence and computational efficiency of successive convexification by 1) enabling efficient sub-problem generation and warm-starting using Transformer-based tight constraint prediction, 2) utilizing rotation-invariant data augmentation, lowering the number of optimization problem samples required for training to under 2,000 samples, and 3) ensuring feasibility by only warm starting the convex subproblems, computing and solving with the penalty cost for the full problem. 
Figure~\ref{alg:TSCvx} shows the~\ac{tscvx} inputs, $\theta_i$, and strategy, $s_i$, for a given test case, $i$.

\begin{figure}[hbt!]
    \centering
    \includegraphics[width=0.7\textwidth]{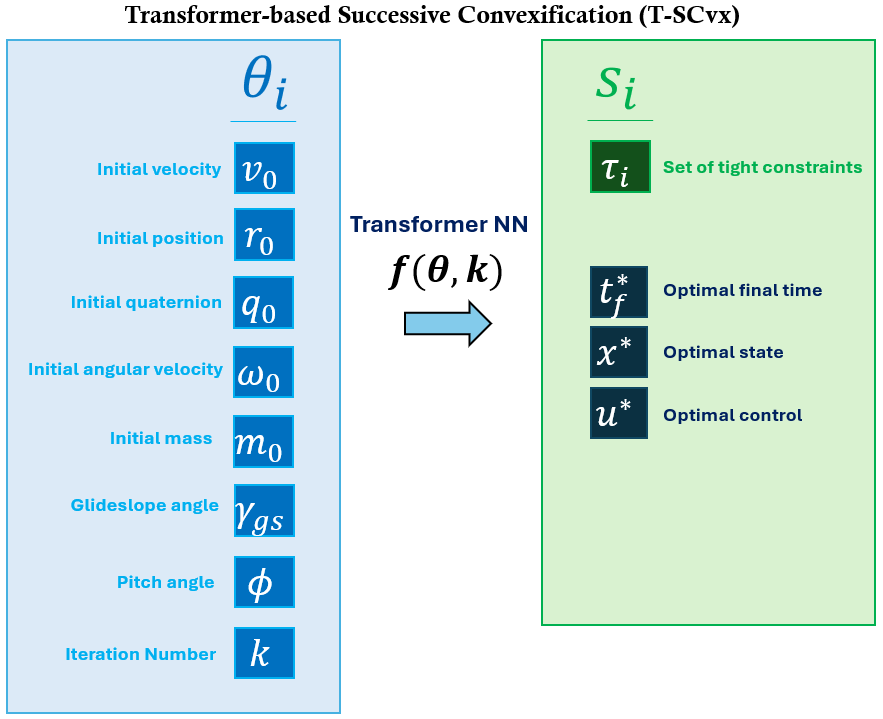}
    \caption{\ac{tscvx} inputs include the initial state and iteration number. The output of the constraint neural network, $\tau(\theta^{(i)})$, is the set of tight constraints, and the outputs of the initial guess neural network, $t_f^*(\theta^{(i)})$, $x_*(\theta^{(i)})$, and $u^*(\theta^{(i)})$, are the optimal final time, state, and control.}
    \label{fig:Tscvx}
\end{figure}

\subsection{Outline}

This paper is formulated as follows.
Section~\ref{sec:methods} defines the 6-\ac{dof} minimum-fuel powered descent guidance problem formulation, details the Successive Convexification Sequential Convex Programming solver, defines tight constraint prediction and the recovered optima achieved using~\ac{tpdg}, details the learning process, and defines the real-time test problem setup of~\ac{tscvx} for 6-\ac{dof} powered descent guidance.
The~\ac{tscvx} results and analysis are in Section \ref{sec: 6 DoF TPDG}.
Finally, areas of future work are described in Section~\ref{sec: Future Directions}, and conclusions are stated in Section~\ref{sec: Conclusion}.

\section{Methods}\label{sec:methods}
This section defines the generalized non-convex optimization problem and the associated notation, introduces the 6-\ac{dof} powered descent guidance problem and its~\ac{scvx} formulation, formalizes the process of tight constraint prediction to construct a reduced problem, and defines the~\ac{tpdg} algorithm, details the learning process, and finally covers the process of achieving real-time 6-\ac{dof} powered descent guidance with~\ac{tpdg} using a Mars landing test case.

\subsection{Non-Convex Optimization}
The general non-convex continuous-time optimal control problem can be solved using a direct method by discretizing and then iteratively solving a system of equations, seeking to minimize cost while maintaining constraint satisfaction.
The general discretized non-convex optimization problem can be written as:

\textbf{Non-Convex Problem:}

\begin{equation}
\begin{aligned}
& \min_z f (z), \\
& \text{subject to:} \\
& g_i(z) = 0, \quad \forall i \in \mathcal{I}_{\text{eq}}, \\
& g_i(z) \leq 0, \quad \forall i \in \mathcal{I}_{\text{ineq}}, \\
& h_j(z) \leq 0, \quad \forall j \in \mathcal{J}_{\text{ineq}},
\label{eq: non-convex}
\end{aligned}
\end{equation}

where~\eqref{eq: non-convex} describes the nonlinear system dynamics, the non-convex state and control constraints, and the convex state and control constraints.
$\mathcal{I}_{\text{eq}} := \{1, 2, \ldots, e\}$ represents the set of non-convex equality constraint indices, $\mathcal{I}_{\text{ineq}} := \{e + 1, \ldots, p\}$ represents the set of non-convex inequality constraint indices, and $\mathcal{J}_{\text{ineq}} := \{1, 2, \ldots, q\}$ represents the set of convex inequality indices.
We assume that $g_i(z)$ and $h_j(z)$ are continuously differentiable for all $i \in I_{\text{eq}} \cup I_{\text{ineq}}$ and $j \in J_{\text{ineq}}$.
We further assume that $f(z) \in C^1$, but note that $f(z)$ can be an element of $C^0$ in practice.

\subsection{Problem Formulation: Six-Degree-of-Freedom Powered Descent Guidance}

This work aims to improve the computational efficiency of the 6-\ac{dof} powered descent guidance problem with free ignition time, including aerodynamic forces and conditional enforced constraints \cite{SzmukReynoldsEtAl2020}.
While this problem setup enables high-fidelity locally fuel-optimal trajectory generation via explicit inclusion of operational and mission constraints, directly solving the problem in real-time is challenging due to nonlinear dynamics and non-convex state and control constraints, which do not yield close-form solutions.
Furthermore, the convergence of this nonconvex problem depends heavily on both the discretization scheme and the development of a suitable initial guess trajectory to initialize the solver.

The 6-\ac{dof} powered descent guidance problem formulation used in this work assumes that speeds are sufficiently low such that planetary rotation and changes in the planet's gravitational field are negligible.
The spacecraft is assumed to be a rigid body with a constant center of mass and inertia and a fixed center of pressure.
The propulsion consists of a single rocket engine that can be gimbaled symmetrically about two axes bounded by a maximum gimbal angle $\delta_\text{max}$.
The engine is assumed to be throttleable between $T_\text{min}$ and $T_\text{max}$, remaining on until the terminal boundary conditions are met.
An atmospheric drag model is included, assuming ambient atmospheric density and constant pressure. 
The minimum-fuel 6-\ac{dof} powered descent guidance problem is formulated as follows:

\textbf{Cost Function:}
\begin{equation}
    \min_{t_f, \mathbf{T}_\mathcal{B}(t)} -m(t_f)
    \label{eq:cost}
\end{equation}

\textbf{Boundary Conditions:}
\begin{subequations}
\label{eq:boundary_conditions}
\begin{align}
    & t_f \in [0, t_{f,\text{max}}] \\
    & m(t_{0}) = m_{0}\\
    & \mathbf{r}_\mathcal{I}(t_{0}) = r_{0}  \\
    & \mathbf{v}_I(t_{0}) = v_{0} \\
    & \omega_\mathcal{B}(t_{0}) = \omega_{0}\\
    & \mathbf{r}_I(t_f) = r_f\\
    & \mathbf{v}_I(t_f) = v_f \\
    & \mathbf{q}_{\mathcal{B} \leftarrow I}(t_f) = \mathbf{q}_{f}\\
    & \omega_B(t_f) = \omega_f
\end{align}
\end{subequations}

\textbf{Dynamics:}
\begin{subequations}
\label{eq:dynamics}
\begin{align}
    & \dot{m}(t) = -\alpha_{\dot{m}} \|\mathbf{T}_{\mathcal{B}}(t) \|_2 - \beta_{\dot{m}}\\
    & \dot{\mathbf{r}}_I(t) = \mathbf{v}_I(t)\\
    & \dot{\mathbf{v}}_I(t) = \frac{1}{m(t)} \mathbf{C}_{I \leftarrow \mathcal{B}}(t) (\mathbf{T}_\mathcal{B}(t) + \mathbf{A}_\mathcal{B}(t)) + \mathbf{g}_I \\
    & \dot{\mathbf{q}}_{\mathcal{B} \leftarrow I}(t) = \frac{1}{2} \Omega_{\omega_\mathcal{B}(t)} \mathbf{q}_{B \leftarrow I}(t) \\
    & \mathbf{J}_\mathcal{B} \dot{\omega}_\mathcal{B}(t) = \mathbf{r}_{T,\mathcal{B}} \times \mathbf{T}_\mathcal{B}(t) + \mathbf{r}_{cp,\mathcal{B}} \times \mathbf{A}_\mathcal{B}(t) - \omega_\mathcal{B}(t) \times \mathbf{J}_\mathcal{B} \omega_\mathcal{B}(t)
\end{align}
\end{subequations}

\textbf{State Constraints:}
\begin{subequations}
\label{eq:state_constraints}
\begin{align}
    & m_{\text{dry}} \leq m(t) \\
    & \tan \gamma_{gs} \| H_\gamma \mathbf{r}_I(t) \|_2 \leq e_1 \cdot \mathbf{r}_I(t) \\
    & \cos \theta_{\text{max}} \leq 1 - 2 \| H_\theta \mathbf{q}_{\mathcal{B} \leftarrow I}(t) \|_2 \\
    & \|\omega_\mathcal{B}(t)\|_2 \leq \omega_{\text{max}} 
\end{align}
\end{subequations}

\textbf{Control Constraints:}
\begin{subequations}
\label{eq:control_constraints}
\begin{align}
    & 0 < T_{\text{min}} \leq \|\mathbf{T}_\mathcal{B}(t)\|_2 \leq T_{\text{max}} \\
    & \cos \delta_{\text{max}} \|\mathbf{T}_\mathcal{B}(t)\|_2 \leq e_3 \cdot \mathbf{T}_\mathcal{B}(t),
\end{align}
\end{subequations}

where the cost function in Eq.~\eqref{eq:cost} minimizes the mass at the final time, and the boundary conditions in Eq.~\eqref{eq:boundary_conditions} constrain the final time range and the initial and final states.
The dynamics in Eq.~\eqref{eq:dynamics} include mass depletion, translational state evolution, and rigid-body attitude dynamics.
Finally, the state and control constraints in Equations~\eqref{eq:state_constraints} and \eqref{eq:control_constraints} ensure that the mass is bounded, a glideslope constraint holds for landing, a maximum tilt angle is not exceeded, angular velocity is bounded, thrust is bounded, and the angle of attack is bounded.
We refer the reader to~\cite{SzmukReynoldsEtAl2020, SzmukAcikmese2018} for an extended discussion and derivation of the 6-\ac{dof} minimum fuel powered descent guidance problem

\subsubsection{Successive Convexification}\label{sec: SCvx}

\ac{scp} solves non-convex direct optimization problems by iteratively solving a sequence of local convex approximations \cite{MalyutaReynoldsEtAl2022}.
The~\ac{scp} algorithm used in this work is~\ac{scvx} due to its ability to solve non-convex constrained optimal control problems with global convergence and superlinear convergence-rate guarantees~\cite{MaoSzmukEtAl2016}.
    
\ac{scvx} solves the desired optimal control problem to optimality by successively linearizing non-convex dynamics and constraints about the initial guess provided by the iteration prior, converting the nonconvex problem into a set of convex subproblems.
Furthermore,~\ac{scvx} is guaranteed to recover local optimality if the converged solution is feasible with respect to the full problem and, if the Kurdyka-Lojasiewicz (KL) inequality holds at the converged solution, then the solution is unique~\cite{MaoSzmukEtAl2016}.

At iteration $k$,~\ac{scvx} uses a virtual control $v_i \in \mathbb{R}^{n_v}$ and $i \in 1 \dots N-1$ and the trust region $r_k \in \mathbb{R}$ to facilitate convergence.
At the $(k+1)^{th}$ iteration a convex subproblem is defined as:
    
\textbf{\ac{scvx} Convex Optimal Control Subproblem:}
\begin{equation}
\begin{aligned}
& \min_{d, w} \quad & L^k(d, w) \\
& \text{subject to:} \quad & u^k + w &\in U, \\
& & x^k + d &\in X, \\
& & ||w|| &\leq r^k,
\end{aligned}
\label{eq: subproblem}
\end{equation}

where $U$ and $X$ are convex sets, often second-order cones, $L^k(d, w)$ is the penalized cost function, $(x^k, u^k)$ are the current state and control iterates, $d_i := x_i - x^k_i$, and $w_i := u_i - u^k_i$.
The iterates $(x^k, u^k)$ are then successively updated using the linearized dynamics, which include virtual control and virtual buffer zones to mitigate artificial infeasibility:

\begin{equation}
    v := [v_1^T, v_2^T, \dots, v_{N-1}^T]^T \in \mathbb{R}^{n_v (N-1)},
    \label{eq: virtual control}
\end{equation}

\begin{equation}
    s' := [s'^T_1, s'^T_2, \dots s'^T_{N-1}]^T \in \mathbb{R}_+^{n_s (N-1)}.
    \label{eq: virtual buffer}
\end{equation}

The virtual control term $v$ is left unconstrained, so any state in the feasible region of the convex subproblem is reachable in finite time, preventing artificial infeasibility after linearization.
The virtual buffer zone $s'_i \in \mathbb{R}_+^{n_s}$ similarly maintains state reachability for the linearized non-convex state and control constraints.
The first-order approximation of the dynamics and nonconvex constraints is then:

\begin{equation}
    x_{i+1}^k + d_{i+1} = f(x_i^k, u_i^k) + A_i^k d_i + B^k_i w_i + E^k_i v_i,
    \label{eq: dynamics linear}
\end{equation}

\begin{equation}
    s(x_i^k, u_i^k) + S_i^k d_i + Q^k_i w_i - s'_i \leq 0,
    \label{eq: nonconvex constraints linear}
\end{equation}

where $A_i^k := \frac{\partial}{\partial x_i} f(x_i, u_i)|_{x_i^k, u_i^k}$, $B_i^k := \frac{\partial}{\partial u_i} f(x_i, u_i)|_{x_i^k, u^k_i}$, $S_i^k := \frac{\partial}{\partial x_i} s(x_i, u_i)|_{x_i^k, u^k_i}$, $Q_i^k := \frac{\partial}{\partial u_i} s(x_i, u_i)|_{x_i^k, u^k_i}$, and $E_i^k \in \mathbb{R}^{n_x \times n_v}$ such that $\text{im}(E_i^k) = \mathbb{R}^{n_x}$. Algorithm~\ref{alg:SCvx} details the full~\ac{scvx} algorithm.

\begin{algorithm}[H]
\caption{The SCvx Algorithm}
\label{alg:SCvx}
\begin{algorithmic}[1]
\Procedure{SCvx}{$x^1, u^1, \lambda, \epsilon_{\text{tol}}$}
    \State \textbf{input:} Select initial state $x^1 \in X$ and control $u^1 \in U$. Initialize trust region radius $r^1 > 0$. Select penalty weight $\lambda > 0$, and parameters $0 < \rho_0 < \rho_1 < \rho_2 < 1$, $r_l > 0$ and $\alpha > 1, \beta > 1$.
    \While{not converged, i.e., $\Delta J^k > \epsilon_{\text{tol}}$}
        \State \textbf{step 1:} At $(k + 1)$-th succession, solve Equation \ref{eq: subproblem} at $(x^k, u^k, r^k)$ to get an optimal solution $(d, w)$.
        \State \textbf{step 2:} Compute the actual change in the penalty cost:
        \begin{equation} \label{eq:actual_change}
            \Delta J^k = J(x^k, u^k) - J(x^k + d, u^k + w),
        \end{equation}

        where $J(x,u) := C(x,u) + \sum_{i=1}^{N-1} \lambda_i P(x_{i+1} - f(x_i, u_i), s(x_i, u_i))$.
        
        \State and the predicted change by the convex cost:
        \begin{equation} \label{eq:predicted_change}
            \Delta L^k = J(x^k, u^k) - L^k(d, w),
        \end{equation}

        where $L^k(d,w) := C(x^k + d, u^k + w) + \sum_{i=1}^{N-1} \lambda_i P(E^k_i v_i, s'_i)$.
        
        \If{$\Delta J^k = 0$}
            \State stop, and return $(x^k, u^k)$;
        \Else
            \State compute the ratio
            \begin{equation} \label{eq:rho}
                \rho^k := \frac{\Delta J^k}{\Delta L^k}.
            \end{equation}
        \EndIf
        \State \textbf{step 3:}
        \If{$\rho^k < \rho_0$}
            \State reject this step, contract the trust region radius, i.e., $r^k \leftarrow \frac{r^k}{\alpha}$ and go back to step 1;
        \Else
            \State accept this step, i.e., $x^{k+1} \leftarrow x^k + d$, $u^{k+1} \leftarrow u^k + w$, and update the trust region radius $r^{k+1}$ by
            \begin{equation}
                r^{k+1} =
                \begin{cases}
                    \frac{r^k}{\alpha}, & \text{if } \rho^k < \rho_1; \\
                    r^k, & \text{if } \rho_1 \leq \rho^k < \rho_2; \\
                    \beta r^k, & \text{if } \rho_2 \leq \rho^k.
                \end{cases}
                \label{eq: trust region update}
            \end{equation}
        \EndIf
        \State $r^{k+1} \leftarrow \max\{r^{k+1}, r_l\}$, $k \leftarrow k + 1$, and go back to step 1.
    \EndWhile
    \State \Return $(x^{k+1}, u^{k+1})$.
\EndProcedure
\end{algorithmic}
\end{algorithm}

Following the guidance in~\cite{MaoSzmukEtAl2016}, heuristic recommendations for parameter choice include choosing $\rho_0 \approx 0$, $\rho_1 \gtrsim 0$, $\rho_2 \lesssim 1$, $r_l \gtrsim 0$.
Equation~\eqref{eq:rho} compares the realized nonlinear cost reduction $\triangle J^k$ to the predicted linear cost reduction $\triangle L^k$ using the previous, $k^{th}$, iteration.
If the linear cost reduction over-predicts the realized nonlinear cost reduction, $\rho^k < \rho_0 \ll 1$, the step is rejected.
Otherwise, the step is accepted, either contracting, maintaining, or growing the trust region size according to Eq.~\eqref{eq: trust region update}; shrinking the trust region when the linearization accuracy is acceptably deficient and growing when the linear cost reduction under-predicts or almost under-predicts the realized nonlinear cost reduction.
Recent work has extended trust region updates to include state-dependent trust regions based on a nonlinearity index defined by the state transition matrix~\cite{BernardiniBaresiEtAl2024}.
We note that a feasible initial guess is not required for~\ac{scvx} and an infeasible initial guess does not affect the algorithm's convergence guarantees~\cite{MaoSzmukEtAl2016}.

In this work,~\ac{scvx} is applied to a discretized system, where the decision vector $z$ is finite-dimensional and the exact penalty function used in Eq.\eqref{eq:actual_change} is defined as

\begin{equation}
    J(z) := g_0(z) + \sum_{i \in \mathcal{I}_{\text{ncvx eq}}} \lambda_i |g_i(z)| + \sum_{i \in \mathcal{I}_{\text{ncvx ineq}}} \lambda_i \max(0, g_i(z)) + \sum_{i \in \mathcal{I}_{\text{cvx ineq}}} \tau_i \max(0, h_j(z)),
    \label{eq: exact penalty function}
\end{equation}

where the cost $g_0(z) \in C^1$, nonconvex equality and inequality constraints $g_i(z)$ and convex inequality constraints $h_i(z)$ are continuously differentiable for all $i \in \mathcal{I}_{\text{ncvx eq}} \cup \mathcal{I}_{\text{ncvx ineq}} \cup \mathcal{I}_{\text{cvx ineq}}$, and penalty weights $\lambda_i \geq 0$ and $\tau_i \geq 0$.

The only assumption required to achieve global weak convergence is the~\ac{licq} : the columns of the matrices that make up the gradients of the active constraints are linearly independent at the local optimum~\cite{MaoSzmukEtAl2016}.
If the~\ac{licq} is satisfied, Algorithm~\ref{alg:SCvx} always has limit points and any limit point $\bar{z}$ is a stationary point of the non-convex penalty function, Equation~\eqref{eq: exact penalty function}.
Furthermore, if $\bar{z}$ is feasible for the original non-convex problem, then it is a~\ac{kkt} point of the problem.

To achieve strong convergence, where single limit point convergence is guaranteed, Lipschitz continuous gradients are required for the set of inequality constraints: $\exists L_i \geq 0$, such that $||\nabla g_i(z_2) - \nabla g_i (z_1)|| \leq L_i ||z_2 - z_1||$ for all $i = 0, \dots, p+q$, and the penalized cost function $J(z)$ must have the KL property~\cite{AttouchBolteEtAl2013}.
If both these conditions hold, along with the~\ac{licq}, the sequence $\{z^k\}$ generated by~\ac{scvx} always converges to a single limit point $\bar{z}$, and $\bar{z}$ is a stationary point of the non-convex penalty $J(z)$.
Additionally, if $\bar{z}$ is feasible for the original non-convex problem, then it is a~\ac{kkt} point of that problem. 

Finally, to maintain the superlinear convergence rate, strict complementary slackness must be met at the local optimum $\bar{z}$: $\lambda_i > 0$ for all active constraints 
and for $z^k \rightarrow \bar{z}$, there are at least $n_u (N-1)$ binding constraints:

\begin{equation}
    |\mathcal{A}_{\text{ncvx eq}}| + |\mathcal{A}_{\text{ncvx ineq}}| + |\mathcal{A}_{\text{cvx ineq}}| \geq n_u (N-1),
    \label{eq: binding constraints}
\end{equation}

where $\mathcal{A}$ is the set of active, or tight, constraints.
While bang-bang control solutions, which hold for the 3-\ac{dof} powered descent guidance problem, always uphold the binding assumption, the 6-\ac{dof} extension, which includes rotational dynamics and aerodynamics forces, is not guaranteed to yield this type of control solution.
If both of these assumptions hold, along with the~\ac{licq}, superlinear convergence is theoretically guaranteed.
Recent extensions of~\ac{scvx} include penalty-based reformulations of path constraints, generalized time-dilation, multiple-shooting discretization, $l_1$ exact penalization of the nonconvex constraints, and the prox-linear method for convex-composite minimization~\cite{ElangoLuoEtAl2024}.
For a full derivation and description of the~\ac{scvx} algorithm and its convergence properties, see~\cite{MaoSzmukEtAl2016}.

\subsection{Tight Constraint Prediction}
Consider the parametric formulation of the powered descent guidance problem, where the parameter vector $\theta \in \Theta \subseteq \mathbb{R}^{n_p}$ is drawn from a representative set of parameters and maps to a binary vector $\tau(\theta) \in \{0,1\}^M$.
Here, ones denote tight or active constraints at the indicated discretization nodes and zeros denote non-tight or inactive constraints.
If the optimization problem is non-degenerate, then the tight constraints serve as the support constraints for the optimization problem, and removing any of the tight constraints would result in a decrease in the objective function for minimization problems~\cite{Calafiore2010}.
\ac{tpdg} efficiently learns the mapping $\theta \rightarrow \tau(\theta)$ using prior runs of the constrained optimization problem to accurately predict the set of tight or active constraints at the globally (or locally) optimal solution.

For an inequality-constrained optimization problem (Equation~\eqref{eq: non-convex}), the first order necessary conditions (\ie{}, \ac{kkt} conditions), must hold.

\begin{definition}{First Order Necessary Conditions: KKT Conditions}
    
    \[
    \mathcal{L}(x, \lambda) = f(x) - \sum_{i \in \mathcal{I} \cup \mathcal{E}} \lambda_i c_i(x)
    \]
    
    \begin{enumerate}
        \item $\nabla_x \mathcal{L}(x^*, \lambda^*) = 0$
        \item $c_i(x^*) = 0, \quad i \in \mathcal{E}$
        \item $c_i(x^*) \geq 0, \quad i \in \mathcal{I}$
        \item $\lambda_i^* \geq 0, \quad i \in \mathcal{I}$
        \item $\lambda_i^* c_i(x^*) = 0, \quad i \in \mathcal{I} \cup \mathcal{E}$
    \end{enumerate}

\end{definition}

The active $\mathcal{A}$ set at any feasible $x$ consists of the set of the equality constraint indices $\mathcal{E}$

\begin{definition}{Active Set}
    \[
    \mathcal{A}(x) = \mathcal{E} \cup \{ i \in \mathcal{I} \mid c_i(x) = 0 \}
    \]
    
\end{definition}

\begin{claim}
    When given the tight/active constraints set, the reduced problem defined by only the equality and active inequality constraints will recover the optimal solution for the original problem.
\end{claim}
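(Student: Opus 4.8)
The plan is to show that an optimal solution $x^*$ of the original problem~\eqref{eq: non-convex} is itself an optimal solution of the reduced problem obtained by keeping only the constraints indexed by the active set $\mathcal{A}(x^*)$ (all equalities together with the active inequalities) and discarding every inactive inequality. I would proceed in three steps: (i) verify that $x^*$ is feasible for the reduced problem; (ii) verify that $x^*$, paired with the sub-vector $\lambda^*|_{\mathcal{A}(x^*)}$ of the original multipliers, satisfies the \ac{kkt} conditions of the reduced problem; and (iii) upgrade this to optimality --- globally when the retained problem is convex, which is exactly the situation for the losslessly convexified 3-\ac{dof} problem and for every convex subproblem~\eqref{eq: subproblem} solved inside \ac{scvx}, and locally under the non-degeneracy, \ac{licq}, and strict-complementarity hypotheses already invoked in Section~\ref{sec: SCvx} otherwise.

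Step (i) is immediate: the reduced constraint set is a subset of the original one, so feasibility of $x^*$ for the original problem implies feasibility for the reduced problem, and by the definition of $\mathcal{A}(x^*)$ each retained inequality satisfies $c_i(x^*)=0$. For step (ii), write the reduced Lagrangian $\mathcal{L}_R(x,\lambda) = f(x) - \sum_{i \in \mathcal{A}(x^*)} \lambda_i c_i(x)$. The key observation is complementary slackness for the original problem: every dropped index lies in $\mathcal{I}\setminus\mathcal{A}(x^*)$, so $c_i(x^*) > 0$ and hence $\lambda_i^* = 0$, which means those terms contribute nothing to $\nabla_x\mathcal{L}(x^*,\lambda^*)$. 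Therefore $\nabla_x \mathcal{L}_R(x^*,\lambda^*|_{\mathcal{A}(x^*)}) = \nabla_x \mathcal{L}(x^*,\lambda^*) = 0$, so stationarity transfers verbatim; dual feasibility $\lambda_i^*\ge 0$, primal feasibility, and the remaining complementary-slackness conditions carry over unchanged (the latter trivially, since every retained inequality is active). Thus $x^*$ is a \ac{kkt} point of the reduced problem.

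Step (iii) starts from the elementary sandwich: because the reduced feasible set contains the original feasible set, the reduced optimal value is at most $f(x^*)$. If the retained problem is convex, the \ac{kkt} conditions established in step (ii) are sufficient for global optimality, so $x^*$ attains the reduced optimum and the reduced problem recovers $x^*$ exactly. In the genuinely non-convex case I would argue locally: by continuity of the $c_i$ and the strictness of the dropped inequalities at $x^*$, there is a neighborhood of $x^*$ on which every dropped constraint still holds, so that neighborhood intersected with the reduced feasible set coincides with the same neighborhood intersected with the original feasible set; hence $x^*$ remains a local minimizer of the reduced problem, and under second-order sufficiency with strict complementarity it is an isolated one, matching the optimum the warm-started solver is meant to return.

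The main obstacle is precisely step (iii): passing from "\,$x^*$ is a \ac{kkt} point of the reduced problem\," to "\,$x^*$ is \emph{the} optimal solution of the reduced problem.\," Discarding constraints enlarges the feasible set, so without convexity a spurious minimizer could in principle appear elsewhere in the enlarged region; the claim is therefore fully rigorous only when the reduced problem inherits convexity (the 3-\ac{dof} \ac{lcvx} case and the \ac{scvx} subproblems), and otherwise must be read in the local sense. This is exactly why the non-degeneracy assumption is needed --- so that the tight constraints are genuine support constraints in the sense of~\cite{Calafiore2010} --- why the \ac{licq} and strict-complementarity hypotheses of~\cite{MaoSzmukEtAl2016} are imposed, and why the full problem is ultimately warm-started with the reduced solution rather than replaced by it.
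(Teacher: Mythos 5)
Your proposal is correct but takes a genuinely different route from the paper. The paper argues through feasible descent directions: it posits a feasible sequence $\{z_k\}$ with limiting direction $d$ satisfying $\nabla f(x^*)^T d < 0$, Taylor-expands $f(z_k)$ about $x^*$, and concludes that no such descent direction can exist at the optimum, so that only the active constraints --- the ones that actually block descent --- determine the first-order geometry, and the inactive ones may be discarded. You instead transfer the KKT system directly: complementary slackness zeroes the multipliers of every dropped (inactive) constraint, so stationarity, dual feasibility, and complementarity survive restriction to the active set verbatim, and you then close with the feasible-set sandwich plus either convexity (global optimality) or local coincidence of the reduced and original feasible sets near $x^*$ (local optimality). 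Your route buys two things. First, it avoids the shakier steps in the paper's write-up (the assertion that $o(\|z_k - x^*\|) > 0$, and the concluding inference from ``$d^T \nabla f(x^*) \geq 0$ happens when $c_i(x^*) = 0$'', which does not follow as stated). Second, it makes explicit the caveat the paper elides: deleting inactive constraints enlarges the feasible set, so without convexity the reduced problem could acquire a strictly better minimizer elsewhere; the claim is therefore global only for the convex subproblems (the case actually exploited by the algorithm, which also re-checks the full penalty cost before terminating) and otherwise must be read locally under the non-degeneracy, LICQ, and strict-complementarity hypotheses. Both proofs rest on the same insight --- that first-order optimality at $x^*$ depends only on the active constraints --- but your decomposition into feasibility, KKT transfer, and optimality upgrade is the standard, defensible version of the argument.
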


\begin{proof}
    
    Assume there exists a feasible sequence $\{ z_k \}$ such that $\nabla f(x^*)^T d < 0$. Then, for the limiting direction $d$:

    \begin{equation}
    \lim_{z_k \in s_d} \frac{z_k - x}{\| z_k - x \|} \rightarrow d,
    \label{eq: limiting direction}
    \end{equation}

    where $s_d$ is some subsequence. Then, by the Taylor series expansion, there exists a limiting direction $d$ such that 

    \[
    f(z_k) = f(x^*) + (z_k - x^*)^T \nabla f(x^*) + o(\|z_k - x^*\|).
    \]
    
    Substituting in the limiting direction expression from Equation \ref{eq: limiting direction}, $(z_k - x^*)^T = \|z_k - x^*\| d^T$:
    
    \[
    f(z_k) = f(x^*) + \|z_k - x^*\| d^T \nabla f(x^*) + o(\|z_k - x^*\|),
    \]

    and we observe $\|z_k - x^*\| > 0$ and $o(\|z_k - x^*\|) > 0$. Further, $d^T \nabla f(x^*) < 0$, from our assumption.
    
    From the~\ac{kkt} conditions, there exists a local solution $x^* \in \mathcal{O} \quad \text{for} \quad k > K \in \mathbb{N}$ such that

    \[
    f(z_k) < f(x^*) + \frac{1}{2} \|z_k - x^*\| \cdot d^T \nabla f(x^*).
    \]

    Since $d^T \nabla f(x^*) \geq 0 \quad \text{happens when} \quad c_i(x^*) = 0$ and the active set is defined by $\mathcal{A} = \{ i \in \mathcal{I} \mid c_i(x^*) = 0 \} \cup \mathcal{E}$, the set of tight/active inequality constraints and the equality constraints hold at the optimal solution. Therefore, solving the reduced problem defined by constraints in the active set $\mathcal{A}$ achieves the same solution as solving the full problem.

\end{proof}

\subsection{Transformer-based Successive Convexification}

In this work, we extend~\ac{tpdg} to the Successive Convexification algorithm, resulting in the Transformer-based Successive Convexification algorithm. We enhance the computational and data efficiency for nonconvex-powered descent guidance by employing symmetry-invariant data augmentation.
\ac{tpdg} was previously applied to the 3-\ac{dof} minimum fuel powered descent guidance problem, improving solution times by up to an order of magnitude compared to~\ac{lcvx}~\cite{BridenGurgaEtAl2024}.
By learning to predict the set of tight or active constraints at the optimal solution,~\ac{tpdg} creates the minimal reduced-size problem initialized with only the tight constraints, then uses the solution of this reduced problem to warm-start the direct optimization solver.
6-\ac{dof} powered descent guidance is known to be challenging to solve quickly and reliably due to the nonlinear and non-convex nature of the problem, the discretization scheme heavily influencing solution validity, and reference trajectory initialization determining algorithm convergence or divergence.
Our contributions in this work address these challenges by extending~\ac{tpdg} to learn the set of tight constraints for the~\ac{scvx} formulation of the 6-\ac{dof} powered descent guidance problem. 
In addition to reducing the problem size, feasible and locally optimal reference trajectories are also learned to facilitate convergence from the initial guess.
\ac{tpdg} enables onboard computation of real-time guidance trajectories, demonstrated by a 6-\ac{dof} Mars powered landing application problem.

Algorithm~\ref{alg:SCvx} is modified to include an inference step that predicts the set of tight constraints for every convex sub-problem based on the problem parameters and iteration number $k$. 
Furthermore, the change in the predicted number of tight constraints is included in the trust region contraction or growth computation to allow the trust region to respond not only to nonlinearity but also to changes in problem structure.
The~\ac{tscvx} algorithm is presented in Algorithm~\ref{alg:TSCvx}.

\begin{algorithm}[H]
\caption{The T-SCvx Algorithm}
\label{alg:TSCvx}
\begin{algorithmic}[1]
\Procedure{T-SCvx}{$x^1, u^1, \lambda, \epsilon_{\text{tol}}, f_1, f_2$}
    \State \textbf{input:} Predict the solution using the solution prediction Transformer NN $z = f_1(\theta)$ and select initial state with the prediction $(x^1, u^1) = z$. Initialize trust region radius $r^1 > 0$. Select penalty weight $\lambda > 0$, and parameters $0 < \rho_0 < \rho_1 < \rho_2 < 1$, $r_l > 0$ and $\alpha > 1, \beta > 1$.
    \While{not converged, i.e., $\Delta J^k > \epsilon_{\text{tol}}$}
        \State \textbf{step 1:} At $(k + 1)$-th succession, predict the set of tight constraints using the trained transformer neural network $\tau = f_2(\theta, k+1)$. Define the subproblem, Equation \ref{eq: subproblem}, using only the constraints identified to be tight.
        \State \textbf{step 2:} Solve Equation \ref{eq: subproblem} at $(x^k, u^k, r^k)$ to get an optimal solution $(d, w)$.
        \State \textbf{step 3:} Compute the actual change in the penalty cost:
        \begin{equation} \label{eq:actual_change}
            \Delta J^k = J(x^k, u^k) - J(x^k + d, u^k + w),
        \end{equation}

        where $J(x,u) := C(x,u) + \sum_{i=1}^{N-1} \lambda_i P(x_{i+1} - f(x_i, u_i), s(x_i, u_i))$.
        
        \State and the predicted change by the convex cost:
        \begin{equation} \label{eq:predicted_change}
            \Delta L^k = J(x^k, u^k) - L^k(d, w),
        \end{equation}

        where $L^k(d,w) := C(x^k + d, u^k + w) + \sum_{i=1}^{N-1} \lambda_i P(E^k_i v_i, s'_i)$.
        
        \If{$\Delta J^k = 0$}
            \State stop, and return $(x^k, u^k)$;
        \Else
            \State compute the ratio
            \begin{equation} \label{eq:rho}
                \rho^k := \frac{\Delta J^k}{\Delta L^k}.
            \end{equation}
        \EndIf
        \State \textbf{step 4:} Compute the percentage of changed constraints $\tau_r = \frac{\sum |\tau_{k+1} - \tau_{k}|}{\text{len}(\tau_{k+1})}$.
        \State \textbf{step 5:}
        \If{$\rho^k < \rho_0$}
            \State reject this step, contract the trust region radius, i.e., $r^k \leftarrow \frac{r^k}{\alpha^{\tau_r}}$ and go back to step 1;
        \Else
            \State accept this step, i.e., $x^{k+1} \leftarrow x^k + d$, $u^{k+1} \leftarrow u^k + w$, and update the trust region radius $r^{k+1}$ by
            \begin{equation}
                r^{k+1} =
                \begin{cases}
                    \frac{r^k}{\alpha^{\tau_r}}, & \text{if } \rho^k < \rho_1; \\
                    r^k, & \text{if } \rho_1 \leq \rho^k < \rho_2; \\
                    \beta^{(1-\tau_r)} r^k, & \text{if } \rho_2 \leq \rho^k.
                \end{cases}
                \label{eq: trust region update}
            \end{equation}
        \EndIf
        \State $r^{k+1} \leftarrow \max\{r^{k+1}, r_l\}$, $k \leftarrow k + 1$, and go back to step 1.
    \EndWhile
    \State \Return $(x^{k+1}, u^{k+1})$.
\EndProcedure
\end{algorithmic}
\end{algorithm}

\subsubsection{Transformer Neural Network Architecture}\label{sec: Transformer NN}

The model structure for T-SCvx is considered for the following prediction problem: given a set of parametric inputs for a constrained optimization problem, $\{ \theta_1, ..., \theta_L)\}$, we would like to predict the set of tight constraints, $\{ \tau(\theta_1), ..., \tau(\theta_L) \}$, where each $\tau(\theta_i)$ is an 1 x M matrix (M is the number of inequality constraints in the original problem)~\cite{BridenGurgaEtAl2024}.
Next, linear encoder and decoder layers transfer the input data into a higher dimensional embedding space and the output into a lower dimensional output space.
A learned position encoding is then applied to preserve the temporal order of the input data.
From the position encoder, a transformer encoder with several heads, $h$, uses multi-head attention to transform the data into query matrices, $Q_h^{(i)}$, key matrices, $K_h^{(i)}$, and value matrices, $V_h^{(i)}$.
Finally, the attention output is generated by scaled production, as shown in Eq.~\eqref{eq: Transformer eqn}.

\begin{equation}
(O_h^{(i)})^T = \text{Attention} \left( Q_h^{(i)}, K_h^{(i)}, V_h^{(i)} \right) = \text{Softmax} \left(\frac{Q_h^{(i)} K_h^{(i) T}}{\sqrt{d_k}} \right) V_h^{(i)}
\label{eq: Transformer eqn}
\end{equation}

Additional linear layers, dropout, and LayerNorm layers are also present in the transformer encoder layer.
The full model was designed in PyTorch using the~\texttt{torch.nn} module~\cite{PaszkeGrossEtAl2017}.
The implemented tight constraints NN for the 6-\ac{dof}~\ac{pdg} application has $1 \times 17$-dimensional problem parameter inputs, including the initial velocity, initial position, angular velocity, initial quaternion, initial mass, pitch angle, glideslope angle, and iteration number, extended from the model trained in~\cite{BridenGurgaEtAl2024}.
We note that the final position and velocity are set to zero as this application is a powered descent landing problem and, consequently, reference frames can be adjusted accordingly for a varying final position.
Additional parameters for the problem can be included and would only result in a larger input size and a potentially larger required neural network architecture.
Furthermore, planetary and spacecraft design parameters were kept constant to represent the chosen mission design.
Since state and constraint parameters may change during operation, these variables were chosen as the parameters for the parametric optimization problem.

\subsection{T-SCvx Algorithm}

Algorithm~\ref{alg:t-pdg} describes the procedure for applying the transformer NNs for problem reduction in real-time.
First, the NN models for predicting tight constraints are called to generate the strategy at iteration $k+1$.
Using the strategy, the solver is called to find the corresponding solution and cost, as determined by the problem's cost function.
Since the full problem is used for the evaluation cost function, T-SCvx will not return a solution unless it is both optimal and constraint satisfying for the full problem.

\begin{algorithm}
  \caption{T-SCvx}\label{alg:t-pdg}
  \begin{algorithmic}[1]
    \Procedure{T-SCvx}{$\theta$, k+1}
        \State $strategy \gets \Call{NN-Prediction}{\mathrm{tight\_constraints\_model}, \theta, k+1}$ \Comment{Predict the optimal strategy}
        \State $(soln, cost) \gets \Call{Reduced-Solve}{\theta, strategy}$ 
        \State \textbf{return} $soln$
    \EndProcedure
  \end{algorithmic}
\end{algorithm}

To acquire the samples required for training, we use the~\ac{scvx} algorithm from the SCP Toolbox, which uses the ECOS solver~\cite{DomahidiChuEtAl2013}\footnote{https://github.com/jump-dev/ECOS.jl} \cite{MalyutaReynoldsEtAl2022}.
A custom implementation of the free final time 6-\ac{dof} minimum fuel powered descent guidance problem presented in Equations~\eqref{eq:cost}-\eqref{eq:control_constraints} was programmed in Julia.
Utilizing the solver, we efficiently sample feasible regions of the solution space to accelerate the convergence of the optimization process.
Numerical scaling was used for variable ranges to ensure the solver was well-conditioned.
Because the 6-\ac{dof} powered descent guidance problem is non-convex, advanced sampling methods must be used to avoid computational inefficiencies by avoiding non-feasible regions in the solution space.
    
\subsubsection{Symbolic Implementation of the 6-DoF Problem}
To utilize~\ac{scvx} to iteratively solve the non-convex optimization problem, the problem must be linearized and discretized.
The dynamics, kinematics, and constraints are linearized through the Jacobian matrix.
The state vector $\mathbf{x}(t)$ and control vector $\mathbf{u}(t)$ are defined as follows:\\
\begin{equation}
    \mathbf{x}(t) = \begin{bmatrix}
        \mathbf{r}(t) & \mathbf{v}(t) & \mathbf{q}(t) & \boldsymbol{\omega}(t) & m(t)
    \end{bmatrix}^T
\end{equation}
\begin{equation}
    \mathbf{u}(t) = \mathbf{T}(t)
\end{equation}
The Jacobian matrix of the convex constraints defined in Equation \ref{eq:dynamics} with respect to the state vector and control vector is defined as:
\begin{equation}
    \mathbf{A}(t) = \frac{\partial \mathbf{f}}{\partial \mathbf{x}}, \quad \mathbf{B}(t) = \frac{\partial \mathbf{f}}{\partial \mathbf{u}}
\end{equation}
The partial derivatives required to calculate the matrix in 6-\ac{dof} rely on quaternion and skew-symmetric matrix derivative calculations and thus require symbolic differentiation to derive the matrix analytically.
Our method uses Symbolics package in Julia to define the partial derivatives efficiently and avoid numerical errors in finite difference methods.
The partial derivatives are calculated at each time step and assembled into the Jacobian matrices $\mathbf{A}(t)$ and $\mathbf{B}(t)$.
The process is repeated at each time step in the successive convexification process as laid out in Equation~\eqref{eq: subproblem}.
    
\subsubsection{Data Sampling Strategy}\label{sec: T-SCvx sampling}
Since the 6-\ac{dof} powered descent guidance problem is non-convex, using a naive uniform sampling algorithm like~\ac{tpdg} in 3-\ac{dof} uses would be too computationally inefficient to generate a sizable dataset for training.
Therefore, our sampling strategy utilizes the symmetry of the system dynamics and constraint activation to efficiently sample over a wide range of initial conditions.
Every initial sample was sampled with only a positive range of East and North initial condition coordinates.
Next, a rotation about the Up axis was computed at the angles $0^\circ$, $45^\circ$, $90^\circ$, $135^\circ$, $180^\circ$, $225^\circ$, $270^\circ$, and $315^\circ$.
We note that all induced constraints are symmetric about the Up axis, so the set of tight constraints that are mapped to remain invariant under these rotations.
By synthetically rotating the dataset around the up axis, sampling efficiency is achieved while encoding symmetries into the NN.
In this work, we sampled a dataset of tight constraints and optimal solutions consisting of less than 1,600 samples.
After performing the set of rotations the dataset reached over 11,600 samples.
Further, this method of data augmentation has been shown to have similar accuracies to rotational invariant NN architectures~\cite{QuirogaRonchettiEtAl2019}.
    
\subsubsection{Training, Validation, and Testing}\label{sec: Training T-SCvx}
The transformer NN architecture (described in Section~\ref{sec: Transformer NN}) implemented for~\ac{tpdg} was utilized for both the tight constraints prediction and solution prediction NNs in~\ac{tscvx}.
All sampled data, including the rotated samples, were split into 80\% training and validation data and 20\% test data.
The datasets were standardized by subtracting the mean and dividing by the standard deviation and a K-fold training process was utilized with $K = 3$, 4000 warmup steps, and two epochs.
MSE loss was used for training and testing the solution neural network, while binary loss was used for testing the tight constraint prediction neural network.
Applying~\ac{tpdg} to 6-\ac{dof} powered descent guidance improves the initial guess for SCP algorithms and can indicate the predicted number of active constraints.
Furthermore,~\ac{tscvx} efficiently determines if superlinear convergence holds for the problem (see Section~\ref{sec: SCvx}).

\section{Transformer-based Successive Convexification}\label{sec: 6 DoF TPDG}

\subsection{Problem Setup and Parameters}
The problem parameters are shown in Table~\ref{tab:problem_parameters_TSCvx}, where the units are dimensionless quantities of length $U_L$, time $U_T$, and mass $U_M$ to improve numerical scaling for the solver.
The same problem parameters were used for successive convexification, except for the trust region initialization, $\eta_\text{init}$, where a smaller trust region was used for~\ac{tscvx} since we assume we are already very close to the final solution when an initial guess is predicted, in addition to ensuring trust region conservatism in the case the guess or tight constraints were not fully accurate.
A total of 1,592 samples were computed using the~\ac{scvx} algorithm (Algorithm~\ref{alg:SCvx}), and rotations were then computed on the data, as described in Section \ref{sec: T-SCvx sampling}, to create a dataset of 11,634 samples.
Throughout this study,~\ac{scvx} was limited to 20 iterations maximum.
If the algorithm does not converge when the maximum iteration number is met, the problem is considered infeasible.
The parameters sampled over include $\theta = \gamma_{gs}, \theta_{\text{max}}, r_0, v_0, q_0, \omega_0, m_0$, including the glideslope angle, maximum pitch angle, initial position, initial velocity, initial quaternion, initial angular velocity, and initial mass.
Representing the full initial state for 6-\ac{dof} motion and constraint parameters which may vary between the parachute and powered descent initiation phase.
An additional extension from 3-\ac{dof}~\ac{tpdg} includes the addition of the mass parameter, allowing multiple trajectories to be computed during the descent process instead of assuming only one trajectory is generated at powered descent initiation.
For the tight constraint prediction NN, an additional parameter of the~\ac{scvx} iteration number, $k$, is included so the NN can predict the tight constraints for a given iteration number.
The final state is maintained constant since a change in the reference frame can be used to represent any desired configuration.
The strategy is a binary vector $\tau(\theta)$ corresponding to the inequality constraints (\ref{eq:state_constraints}-\ref{eq:control_constraints}).
We note that the implementation of~\ac{tscvx} (Algorithm~\ref{alg:TSCvx}) only uses the convex constraints to reduce the convex subproblems, but an extension could be easily made to reduce the non-convex cost function based on the predicted non-convex tight constraints.
Instead of only a final time prediction neural network,~\ac{tscvx} maps to the full discretized solution, including the full state, control, and final time parameter $(x, u, t_f)$, and this enables an accurate initial guess to be generated.

\begin{table}[h!]
\centering
\caption{Problem Parameters}

\begin{tabular}{c c c }
\hline
\hline
\textbf{Param.} & \textbf{Value} & \textbf{Units} \\ 
\hline
g\(_I\) & -\(\mathbf{e}_1\) & \(U_L/U^2_T\) \\ 
\(\rho\) & 0.020 & \(U_M/U^3_L\) \\ 
\(\mathbf{J}_B\) & 0.01 \(\textbf{diag}[0.1 \; 1 \; 1]\) & \(U_M \cdot U^2_L\) \\ 
\(P_{\text{amb}}\) & 0.1 & \(U_M/U^2_T/U_L\) \\ 
\(A_{\text{noz}}\) & 0.5 & \(U^2_L\) \\ 
\(r_{\text{cp},B}\) & \(0_{3 \times 1}\) & \(U_L\) \\ 
\(r_{T,B}\) & -0.01\(\cdot e_1\) & \(U_L\) \\ 
\(\rho S_A C_A\) & 0.2 & - \\
\(I_{\text{sp}}\) & 30.0 & \(U_T\) \\ 
\(\omega_{\text{max}}\) & 90.0 & \(^\circ/U_T\) \\ 
\(\delta_{\text{max}}\) & 20.0 & \(^\circ\) \\ 
\(T_{\text{min}}\) & 0.3 & \(U_M \cdot U_L / U^2_T\) \\ 
\(T_{\text{max}}\) & 5.0 & \(U_M \cdot U_L / U^2_T\) \\ 
V\(_\alpha\) & 2.0 & \(U_L/U_T\) \\ 
\(m_{\text{dry}}\) & 2.0 & \(U_M\) \\ 
\(\mathbf{r}_{I,N}\) & \(0_{3 \times 1}\) & \(U_L\) \\ 
\(\mathbf{v}_{I,N}\) & -0.1\(\cdot e_1\) & \(U_L/U_T\) \\ 
\(\omega_{\mathcal{B},1},\omega_{\mathcal{B},N}\) & \(0_{3 \times 1}\) & \(^\circ/U_T\) \\ 
\(\mathbf{q}_{B \leftarrow I,N}\) & \(q_{\text{id}}\) & - \\ 
\(N\) & 50 & - \\ 
\(N_\text{sub}\) & 49 & - \\ 
\(\text{iter}_{\text{max}}\) & 20 & - \\ 
\(\text{disc}\; \text{method}\) & FOH & - \\ 
\(\lambda\) & 500 & - \\ 
\(\rho_0\) & 0 & - \\ 
\(\rho_1\) & 0.1 & - \\ 
\(\rho_2\) & 0.7 & - \\ 
\(\beta_{sh}\) & 2.0 & - \\ 
\(\beta_{gr}\) & 2.0 & - \\ 
\(\eta_{\text{full, init}}\) & 2.0 & - \\ 
\(\eta_{\text{reduced, init}}\) & 0.01 & - \\ 
\(\eta_{lb}\) & 0.001 & - \\ 
\(\eta_{ub}\) & 10.0 & - \\ 
\(\epsilon_{abs}\) & 0.1 & - \\ 
\(\epsilon_{rel}\) & 0.001 & - \\ 
\(\text{feas} \; \text{tol}\) & 0.5 & - \\ 
\(q_{tr}\) & Inf & - \\ 
\(q_{exit}\) & Inf & - \\ 
solver & ECOS & - \\ 
solver maxit & 1000 & - \\ 
\end{tabular}
\label{tab:problem_parameters_TSCvx}
\end{table}

The NNs for tight constraint and solution prediction used the transformer architecture in Section~\ref{sec: Transformer NN}.
The constraint prediction NN has an input size of 17 and an output size of 12N, using an architecture with 384-dimensional layers, two heads, four layers, and 0.1 dropout.
The solution prediction NN, was designed to be larger since the input size of 16 corresponded to an output size of 17N+1. 768-dimensional layers were used, with two heads, four layers, and 0.1 dropout.
The range of values sampled for NN training is included in Table~\ref{tab:sampledTSCvx}.

\begin{table}
    \centering
    \caption{\label{tab:sampledTSCvx} Sampled Dataset}
    \small
    \begin{tabular}{>{\raggedright\arraybackslash}p{2.5cm} >{\raggedright\arraybackslash}p{2.0cm} >{\raggedright\arraybackslash}p{2.0cm}}
        \hline
        \textbf{Parameter} & \textbf{Min Range} & \textbf{Max Range} \\
        \hline
        K & 1 & 20 \\
        Glideslope angle ($\gamma_{gs}$) & 0 deg & 90 deg \\
        Pitch angle ($\theta_\text{max}$) & 0 deg & 359.4 deg \\
        Initial Position ($r_0$) & 0.003 $\hat{e}_x +$ -13.83 $\hat{e}_y +$ -13.83 $\hat{e}_z$ & 9.997 $\hat{e}_x +$ 13.83 $\hat{e}_y +$ 13.83 $\hat{e}_z$ \\
        Initial Velocity ($v_0$) & -1.998 $\hat{e}_x +$ -2.779 $\hat{e}_y +$ -2.779 $\hat{e}_z$ & -0.001 $\hat{e}_x +$ 2.779 $\hat{e}_y +$ 2.779 $\hat{e}_z$ \\
        Initial Quaternion ($q_0$) & -0.996 + -1 $i +$ -1 $j +$ -1 $k$ &  1 + 1 $i +$ 1 $j +$ 0.999 $k$ \\
        Initial Angular Velocity ($\omega_0$) & -89.98 $\hat{e}_x +$ -123.49 $\hat{e}_y +$ -123.49 $\hat{e}_z$ deg & 89.70 $\hat{e}_x +$ 123.49 $\hat{e}_y +$ 123.49 $\hat{e}_z$ deg \\
        Initial mass ($m_0$) & 0.003 & 3 \\
        \hline
    \end{tabular}
\end{table}

\subsection{Results and Analysis}
Utilizing the training process described in Section~\ref{sec: Training T-SCvx}, the tight constraint prediction and solution NNs were trained with less than 12,000 samples, most of which were produced using efficient rotation-based data augmentation.
Both NNs converged within two epochs of training for batch sizes of 128. Table~\ref{tab:trainandtestTSCvx} shows the training and validation MSE at the third fold of training and the accuracy of each NN on the test dataset.
Since the predicted constraints are binary, binary accuracy was used for test time evaluation, while MSE was used for the floating point solution prediction values. 

\begin{table}[ht]
    \caption{Training and Testing of~\ac{tscvx} with Baseline Comparisons}
    \label{tab:trainandtestTSCvx}
    \centering
    \small
    \begin{tabular}{>{\raggedright\arraybackslash}p{2.5cm} 
                    >{\centering\arraybackslash}p{2.5cm} 
                    >{\centering\arraybackslash}p{2.5cm} 
                    >{\centering\arraybackslash}p{3cm} 
                    >{\centering\arraybackslash}p{2cm}}
    \hline
    Model & Train (MSE) & Validation (MSE) & Test (MSE / Binary Acc.) & \# of Params \\
    \hline
    Constraint NN & 0.024 & 0.024 & 96.45\% (Binary Acc.) & 8.91M \\
    Solution NN & 0.975 & 1.092 & 1.040 (MSE) & 9M \\
    \hline
    Predict Only Zeros & - & - & 95.95\% (Binary Acc.) & - \\
    Predict Only Ones & - & - & 4.05\% (Binary Acc.) & - \\
    \hline
    \end{tabular}
\end{table}

The results from~\ac{tscvx}, from Algorithm~\ref{alg:TSCvx}, compared to~\ac{scvx}, using 545 samples from the test dataset, are shown in Figure \ref{fig:solveTimesSCvx}.
From the test results,~\ac{tscvx} reduces the mean solve time for the 50-timestep 6-\ac{dof} powered descent guidance problem by 66\% and the median solve time by 70\%.
The standard deviation for~\ac{tscvx} is slightly higher than~\ac{scvx}, at 5.24 seconds, but one standard deviation remains below the~\ac{scvx} mean of 14.61.

\begin{figure}
    \centering
    \includegraphics[width=0.8\textwidth]{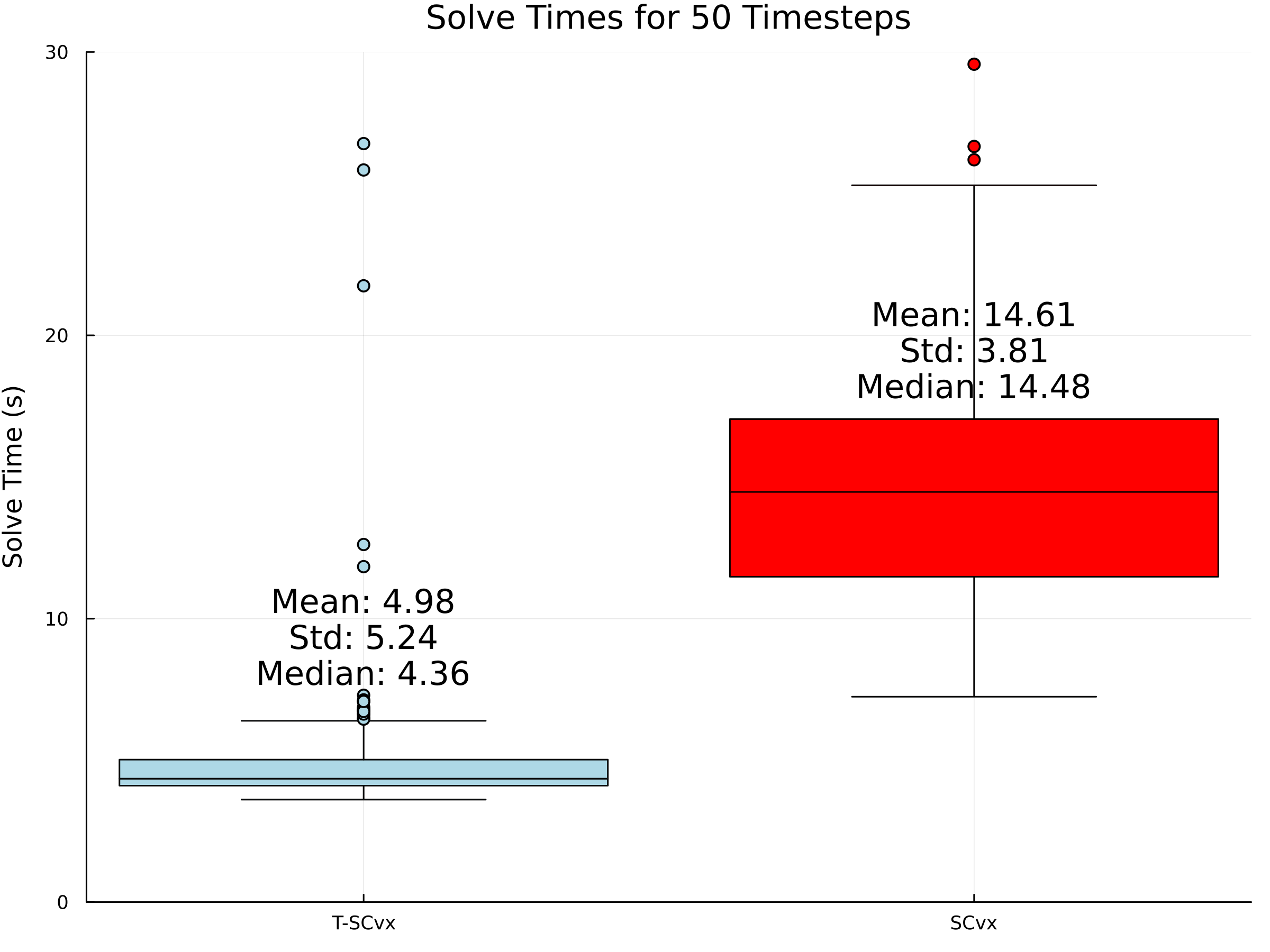}
    \caption{Box plots for~\ac{tscvx} and~\ac{scvx} showing mean, median, and standard deviations.}
    \label{fig:solveTimesSCvx}
\end{figure}

\begin{figure}
    \centering
    \includegraphics[width=0.95\textwidth]{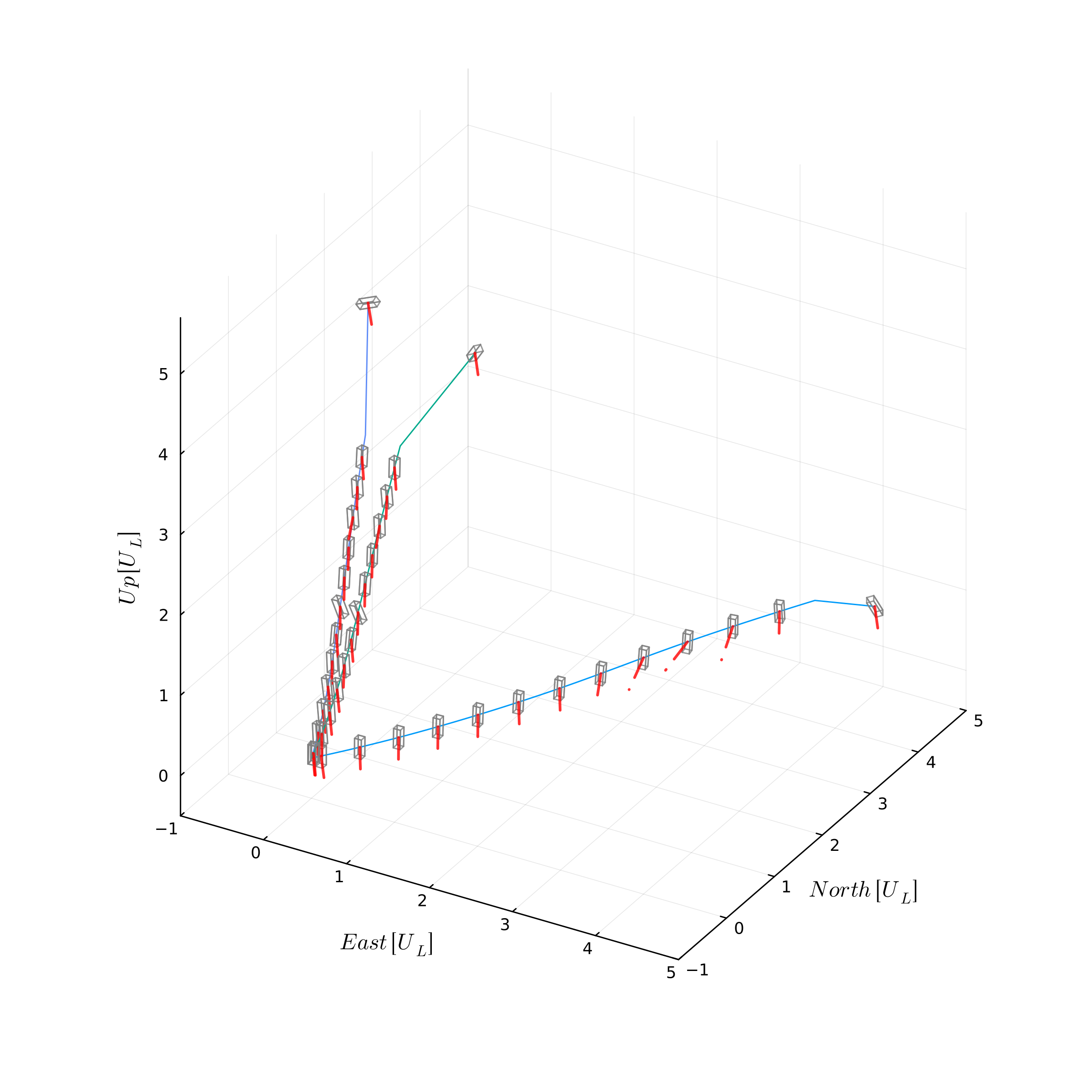}
    \caption{6-\ac{dof} minimum fuel trajectory computed by~\ac{tscvx} with thrust vectors in red.}
    \label{fig:trajectoryReduced}
\end{figure}

Figure~\ref{fig:trajectoryReduced} shows three~\ac{tscvx}-computed test trajectories.
Multiple orientations, initial positions, and velocities are resolved by the solver, computing the control strategy required to reach the final position.
Compared to~\ac{tpdg},~\ac{tscvx} extends the predictive capabilities to include not only convex tight constraints at every convex subproblem iteration but also the full state, control, and final time solution initial guess.
Returned solution feasibility is ensured by keeping the full problem penalty cost and not converging until the actual change in the penalty cost is zero.
A new addition to the trust region radius augments the contraction and growth parameters to scale by changing the number of tight constraints from the last iteration.
This update enables a reduced trust region radius when a large change in the predicted tight constraints occurs.
With the addition of tight constraint prediction and full solution prediction using transformer NNs, the mean solve time of almost 15 seconds for~\ac{scvx} was reduced to less than 5 seconds for~\ac{tscvx} when applied to the Mars 6-\ac{dof}~\ac{pdg} problem for free final time.
Given this significant decrease,~\ac{tscvx} serves as a potential candidate for high-fidelity large divert guidance trajectory generation.

\subsection{Benchmarking with Table Lookup Approaches}

In this section we demonstrate the trade-offs between lookup table methods and using~\ac{tscvx}. 
Lookup tables serve as a potential alternative to learning-based methods.
Previous work in powered descent guidance has explored linear interpolation-based lookup tables for solution and final time prediction \cite{AcikmeseBlackmoreEtAl2008, ScharfPloenEtAl2015}.
Of particular note is that these linear interpolation-based methods assume close to convex or highly discretized problems, to ensure nonlinearities in solution do not result in high prediction errors.
This section explores the effects of table lookup approaches on the non-convex 6-\ac{dof} powered descent guidance dataset.

Further, a KD-tree for nearest neighbor lookups approach was compared in this work~\cite{Bentley1975}.
For ease of comparison, the table lookup methods use the same training and test data and predict the same quantities as~\ac{tscvx}.
Due to the high inference time and memory requirements of the linear interpolation method, the training and test data were both reduced to 10 components using principal component analysis (PCA), implemented with sklearn.decomposition, and then a subset of 100 samples was used for training and 10 samples for testing~\cite{Pearson1901, PedregosaVaroquauxEtAl2011}.

Tables~\ref{tab:comparisonTableTSCvx} and~\ref{tab:comparisonTableTSCvxSolution} show the inference time, mean-squared-errors (MSE),~\ac{ood} MSEs, and peak memory usage for the KDTree and T-SCvx.
Where~\ac{ood} test data was categorized using the 95$^{th}$ percentile for the Mahalanobis distance: $d_M (\mathbf{x}, Q) = \sqrt{(\mathbf{x} - \mathbf{\mu})^T S^{-1} (\mathbf{x} - \mathbf{\mu})}$, where $Q \in \mathbb{R}^N$ is the training dataset's probability distribution with mean $\mathbf{\mu}$ and covariance $S$ and $\mathbf{x}$ is the test data point~\cite{DemaesschalckJouanrimbaudEtAl2000}.
Figure~\ref{fig:memoryvsusage} shows each algorithm type's memory usage and inference time means and standard deviations.
Results are compared against a one-second runtime requirement and 60 MB of Static Random Access Memory (SRAM)$^*$ for a RAD750 flight computer~\cite{BergerDennisEtAl2007}.

\begin{table}[h!]
    \centering
    \caption{Tight Constraints Prediction Performance}
    \begin{tabular}{lccc}
        \toprule
        \textbf{Metric} & \textbf{Linear Interpolation} & \textbf{KDTree} & \textbf{T-SCvx} \\
        \midrule
        \textbf{Inference Time (ms)} & 894.02 & \textbf{0.1500} & 4.5929 \\
        \textbf{MSE} & 0.0368 & \textbf{0.0074} & 0.0241 \\
        \textbf{\ac{ood} MSE} & 0.0700 & 0.0657 & \textbf{0.0373} \\
        \textbf{Peak Memory Usage (MB)} & 1647.0 & 3.300 & \textbf{2.634} \\
        \bottomrule
    \end{tabular}
    \label{tab:comparisonTableTSCvx}
\end{table}

\begin{table}[h!]
    \centering
    \caption{Solution Prediction Performance}
    \begin{tabular}{lccc}
        \toprule
        \textbf{Metric} & \textbf{Linear Interpolation} & \textbf{KDTree} & \textbf{T-SCvx} \\
        \midrule
        \textbf{Inference Time (ms)} & 904.43 & \textbf{0.1411} & 4.9492 \\
        \textbf{MSE} & 0.8176 & \textbf{0.6326} & 1.0450 \\
        \textbf{\ac{ood} MSE} & 2.3195 & 2.4128 & \textbf{1.1093} \\
        \textbf{Peak Memory Usage (MB)} & 1678.5 & \textbf{1.5987} & 9.0118 \\
        \bottomrule
    \end{tabular}
    \label{tab:comparisonTableTSCvxSolution}
\end{table}

\begin{figure}[hbt!]
    \centering
    \includegraphics[width=0.7\textwidth]{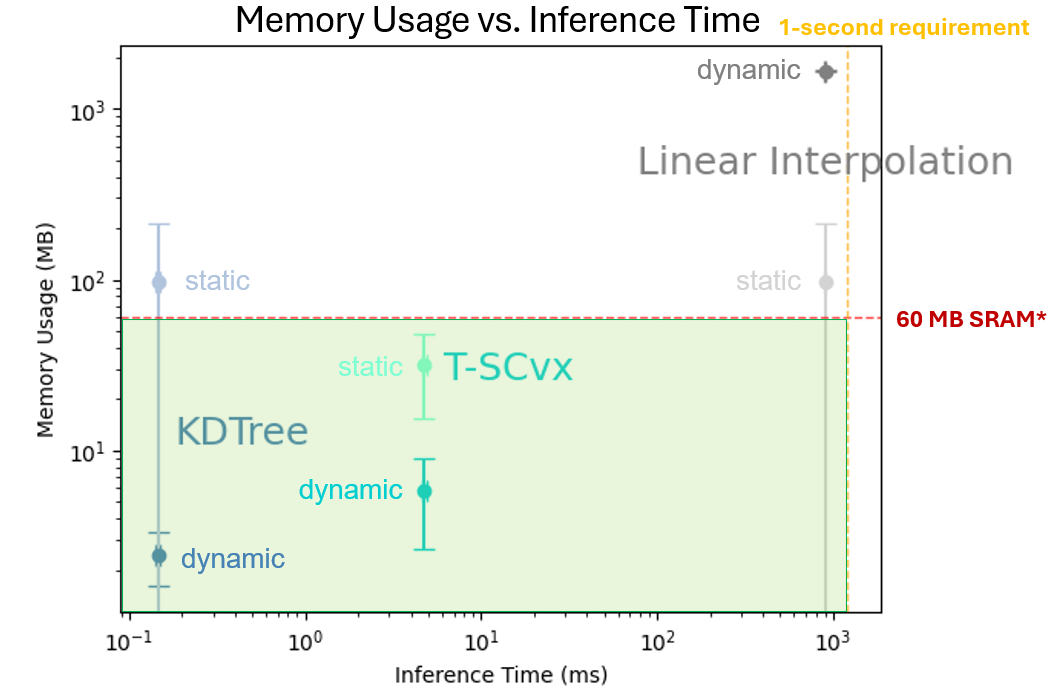}
    \caption{Memory vs. inference time for each warm-start algorithm type.}
    \label{fig:memoryvsusage}
\end{figure}

From Table~\ref{tab:comparisonTableTSCvx},~\ac{tscvx} reduces inference time, when compared to linear interpolation, by over 99\% and memory usage by 99.8\% for tight constraints prediction. 
Further, the~\ac{tscvx} has a comparatively lower MSE, when compared to linear interpolation. 
Impressively, the kd-tree nearest neighbors approach dominates all algorithms for inference time and accuracy, with sub-millisecond inference times and 0.0074 MSE.
This high accuracy does not carry over to~\ac{ood} data though, as~\ac{tscvx} dominates all methods for~\ac{ood} accuracy.
A similar pattern is observed for Table~\ref{tab:comparisonTableTSCvxSolution}, where~\ac{tscvx} dominates the linear interpolation approach for solution prediction, reducing inference time and peak memory usage by 99.5\%.
Interestingly, the linear interpolation and kd-tree lookup approaches have slightly higher accuracies, while still not maintaining the low accuracies for~\ac{ood} data.
The novel kd-tree approach is particularly performant for the solution prediction problem, dominating inference time accuracy, and peak memory usage.
It must be noted that the MSE more than doubles when~\ac{ood} samples are used for the kd-tree, as opposed to~\ac{tscvx}, which has a mostly consistent MSE when faced with~\ac{ood} data.
Overall, the kd-tree has been demonstrated to perform well for the 6-\ac{dof} tight constraint prediction and solution prediction problems, but this performance does not extend to out-of-distribution samples.
Compared to the state-of-the-art linear interpolation-based lookup table in powered descent guidance literature,~\ac{tscvx} exhibits superior performance in inference time, tight constraint prediction accuracy, and memory usage.

\section{Future Work}\label{sec: Future Directions}
While the results obtained for~\ac{tscvx} are promising for improving the computational efficiency and interpretability of the powered descent guidance problem, several future directions can be explored.
As discussed previously, knowledge of or an estimate of the set of tight constraints not only serves as a high-quality warm-start for a numerical optimization problem, but it can also provide insight into the type of convergence guarantees provided by the solver.
As the trust region formulation largely determines iteration number and convergence for the~\ac{scvx} algorithm, additional informed developments of trust region designs could be done using transformer-generated predictions.
Further, a promising area of research would be to dovetail our tight constraint prediction framework into an existing active set-based nonlinear program solver~\cite{GillMurrayEtAl2005}, potentially further increasing computational efficiency.
Finally, sparsity in linear algebra operations is advantageous for efficient linear algebra operations and future use of tight constraint predictions could inform custom solve implementations to promote sparsity in matrix operations~\cite{BookerMajumdar2021}. 

The next steps for this work include verifying the solve time gains are representative of flight-grade radiation-hardened hardware.
Novel contributions include programming, verifying, and validating the use of trained deep neural networks on hardware.
This includes testing~\ac{tpdg} and~\ac{tscvx} with custom solver implementations and developing efficient parallel algorithmic updates where possible.

\section{Conclusion}\label{sec: Conclusion}

We have demonstrated the contribution of Transformer-based Successive Convexification for 6-\ac{dof} powered descent guidance, including a 6-\ac{dof} Mars powered descent guidance test problem.
The main contributions of~\ac{tscvx} include enhanced computational efficiency and reliability in solve times for non-convex trajectory generation, enabled by the design of transformer NNs for tight constraint and solution initial guess prediction.
By mapping the initial state and constraint parameters to the tight constraints and problem state, control, and final time with a transformer NN, a variety of initial conditions and problem parameters are accounted for.
Additionally, the introduction of a rotation-invariant data augmentation method reduced the number of required samples to under 2,000 to achieve test accuracies of over 96\% and under 1 MSE.
As expected, for the 6-\ac{dof} powered descent guidance problem, Transformer-based Powered Descent Guidance demonstrated a greater reduction in mean runtime, with~\ac{tscvx} reducing the mean runtime by 66\%; runtime was reduced by 9.63 seconds when compared to~\ac{scvx}.
Benchmarking against linear interpolation approaches suggested in the literature showed~\ac{tscvx} to reduce the required inference time and memory usage by more than 99\%, with greater or comparable accuracy.
The kd-tree nearest neighbor approach serves as a more interpretable alternative, applicable when test data is not out-of-distribution.

Further, the convergence criteria remained the same as~\ac{scvx}, requiring returned trajectories to satisfy all problem constraints and maintain the feasibility of returned solutions.
An additional update from~\ac{tpdg} includes the initial mass as a parameter in~\ac{tscvx}, allowing for trajectories to be computed even after powered descent initiation.
\ac{tscvx} has been successfully demonstrated as a potential trajectory optimization for fast high-fidelity guidance scenarios, including the application problem of 6-\ac{dof} Mars landing.

\section*{Acknowledgments}
This work was supported in part by a NASA Space Technology Graduate Research Opportunity 80NSSC21K1301.
This research was carried out in part at the Jet Propulsion Laboratory, California Institute of Technology, under a contract with the National Aeronautics and Space Administration and funded through the internal Research and Technology Development program.

\bibliography{ASL,main}

\end{document}